\newtheorem{theorem}{Theorem}[section]
\newtheorem{definition}[theorem]{Definition}
\newtheorem{lemma}[theorem]{Lemma}
\newtheorem{proposition}[theorem]{Proposition}
\newtheorem{corollary}[theorem]{Corollary}
\newtheorem{remark}[theorem]{Remark}
\newtheorem{examplecore}[theorem]{Example}}
\newenvironment{example}{\begin{examplecore}}{\hspace*{\fill}
$\square$\par\vspace{.1cm}\end{examplecore}}
\newcommand{\etale}{\ensuremath{\textrm{\'et}}}
\begin{document}

\title{On homology of linear groups over $k[t]$}
\author{Matthias Wendt}
\address{Matthias Wendt\\
Albert-Ludwigs-Uni\-ver\-si\-t\"at Freiburg, Mathematisches Institut\\
Eckerstra\ss{}e 1, 79104, Freiburg im Breisgau, Germany}
\email{matthias.wendt@math.uni-freiburg.de}

\subjclass{20G10,20E42}
\keywords{linear groups, polynomial rings, group homology, homotopy
  invariance} 

\begin{abstract}
This note explains how to prove that for any simply-connected
reductive group $G$ and any infinite field $k$, the inclusion
$k\hookrightarrow k[t]$ induces an isomorphism on group homology. This
generalizes results of Soul{\'e} and Knudson. 
\end{abstract}

\maketitle
\setcounter{tocdepth}{1}
\tableofcontents

\section{Introduction}

The question of homotopy invariance of group homology is the question
under which conditions on a linear algebraic group $G$ and a
commutative ring $R$ the natural morphism $G(R)\rightarrow G(R[t])$
induces isomorphisms in group homology. This is an unstable version of 
homotopy invariance for algebraic K-theory as established by Quillen
in \cite{quillen:1973:ktheory}. 

The two main results which have been obtained in this direction are
due to Soul{\'e} and Knudson. In \cite{soule:1979:polynomial},
Soul{\'e} determined a fundamental domain for the action of $G(k[t])$
on the associated Bruhat-Tits building and deduced homotopy invariance
for fields of characteristic $p>0$ with field coefficients prime to
$p$. In \cite{knudson:1997:polynomial}, Knudson extended Soul{\'e}'s
approach and deduced homotopy invariance with integral coefficients
for $SL_n$ over infinite fields.

In this paper, we generalize Knudson's theorem to arbitrary
(connected) reductive groups, cf. \prettyref{thm:knudson}: 

\begin{theorem}
\label{thm:main}
Let $k$ be an infinite field and let $G$ be a connected reductive
smooth linear algebraic group over $k$.  Then the canonical inclusion
$k\hookrightarrow k[t]$  induces isomorphisms
$$
H_\bullet(G(k),\mathbb{Z})\stackrel{\cong}{\longrightarrow}
H_\bullet(G(k[t]),\mathbb{Z}),
$$
if the order of the fundamental group of $G$ is invertible in $k$. 
\end{theorem}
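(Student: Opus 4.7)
The approach is to generalize the method of Soul{\'e} and Knudson from $SL_n$ to arbitrary reductive $G$, exploiting the action of $G(k[t])$ on the Bruhat--Tits building of $G$ over the local field $k((t^{-1}))$. The first step is to reduce to the case that $G$ is semisimple and simply connected. Any reductive $G$ receives a central isogeny from $G^{\mathrm{sc}} \times Z(G)^\circ$ with kernel of order $|\pi_1(G)|$; invertibility of this order in $k$ ensures that the kernel is \'etale, and a Lyndon--Hochschild--Serre spectral sequence comparison together with the elementary fact that $\Ao$-homotopy invariance holds for tori ($T(k) = T(k[t])$ for any split $T$) transfers the theorem between $G$ and its cover.

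For simply connected semisimple $G$, I would invoke Soul{\'e}'s description of the action of $G(k[t])$ on the Bruhat--Tits building $X$ of $G(k((t^{-1})))$: a Weyl sector $S$ in an apartment of $X$ is a strict fundamental domain, and the stabilizer of a face $\sigma \subset S$ of type $I \subset \Delta$ is of the form $P_I(k) \cdot U_{I,\sigma}(k[t])$, where $P_I$ is the standard parabolic of type $I$ and $U_{I,\sigma}$ is a subgroup of the unipotent radical depending on the position of $\sigma$ in the sector. The equivariant spectral sequence
\[
E^1_{p,q} = \bigoplus_{\sigma \in S^{(p)}} H_q(\mathrm{Stab}_{G(k[t])}(\sigma), \mathbb{Z}) \;\Longrightarrow\; H_{p+q}(G(k[t]), \mathbb{Z})
\]
maps to the analogous spectral sequence arising from $G(k)$ acting on $S$ with stabilizers $P_I(k)$, and the two will converge to the same abutment provided each inclusion $P_I(k) \hookrightarrow P_I(k) \cdot U_{I,\sigma}(k[t])$ induces a homology isomorphism.

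The crucial input is therefore this last statement. Writing $P_I = L_I \ltimes V_I$ and filtering $U_{I,\sigma}(k[t])$ above $U_{I,\sigma}(k) \subseteq V_I(k)$ by its lower central series, the Hochschild--Serre spectral sequence reduces the problem to computing the equivariant homology of $L_I(k)$ with coefficients in the graded pieces. These decompose according to the roots $\alpha$ of $G$ with respect to a maximal torus $T \subset L_I$ with $\alpha \notin \Phi(L_I)$, and each such $\alpha$ restricts to a nontrivial character of the central torus $Z(L_I)^\circ$. An elementary lemma --- that $H_\bullet(k^\times, V)$ vanishes for a $k$-vector space $V$ on which $k^\times$ acts via a nontrivial character, using that $\lambda - 1$ is invertible for generic $\lambda \in k^\times$ and infinite $k$ --- then kills all contributions. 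The main obstacle will be making this weight decomposition work rigorously and uniformly: the subgroups $U_{I,\sigma}$ depend intricately on $\sigma$, one must verify that \emph{every} root appearing in each $U_{I,\sigma}(k[t])/U_{I,\sigma}(k)$ has nontrivial restriction to $Z(L_I)^\circ$, and that the lower central series has graded pieces decomposing compatibly with the weight decomposition required for the vanishing lemma.
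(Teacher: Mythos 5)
Your strategy is the paper's: reduce to the simply-connected almost simple case, use Soul\'e's and Margaux's fundamental domain $\mathcal{Q}$ for the action of $G(k[t])$ on the Bruhat--Tits building of $G(k((t^{-1})))$, run the equivariant-homology spectral sequence over $\mathcal{Q}$, and collapse it by showing that each stabilizer retracts in homology onto its reductive part via a central-torus ``center kills'' argument on a filtration of the unipotent part. The one step that fails as written is your vanishing lemma. What the Hochschild--Serre argument actually requires is $H_p\bigl(L_I(k),H_q(V,F)\bigr)=0$ for \emph{all} $q>0$ and $F$ a prime field, where $V$ is a graded piece of the filtration --- not $H_\bullet(k^\times,V)=0$ for $V$ itself. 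For $q\geq 2$ and $\operatorname{char}k=p>0$, the module $H_q(V,F)$ is not a $k$-vector space on which $\lambda$ acts through a single character (it is only an $F$-vector space, and divided-power classes appear on which the endomorphism induced by $\lambda$ is not scalar multiplication by an element of $k$), so ``$\chi(\lambda)^q-1$ is invertible for generic $\lambda$'' does not apply; your lemma covers only characteristic zero and the row $q=1$. The correct input, which the paper takes from Nesterenko--Suslin (\prettyref{prop:vanish}) together with Hutchinson's variant for actions through nonzero powers of units, is that $H_i(A^\times,H_j(A^s,F))=0$ for $j>0$ whenever $A$ has many units; every infinite field has many units, and this closes the gap.

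Two smaller points. First, your worry about whether every root occurring in $U_{I,\sigma}(k[t])/U_{I,\sigma}(k)$ restricts nontrivially to $Z(L_I)^\circ$ is resolved without any case analysis: a unipotent subgroup on which a central subtorus of $L_\sigma$ acts trivially lies in the centralizer of that torus, hence already in $L_\sigma$; this is exactly \prettyref{lem:action}. Second, in the reduction along the central isogeny $\tilde G\to G$ the map $\tilde G(R)\to G(R)$ is not surjective, so a bare Lyndon--Hochschild--Serre comparison does not suffice; one must track the boundary into $H^1_{\etale}(R,\Pi)$ and use $H^1_{\etale}(k,\Pi)\cong H^1_{\etale}(k[t],\Pi)$ together with the injectivity of $H^1_{\etale}(k,\tilde G)\to H^1_{\etale}(k[t],\tilde G)$, as in the proof of \prettyref{prop:reduction}.
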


It follows from the work of Krsti{\'c} and McCool \cite{krstic:mccool}
that homotopy invariance does not work for $H_1$ of rank one groups
over integral domains which are not fields. 
In the case of rank two groups, homotopy invariance fails for $H_2$ as
discussed in \cite{rk2}. It therefore seems that one cannot hope for
an extension of the above result for arbitrary regular rings or even
polynomial rings in more than $1$ variable.

\emph{Structure of the paper:} In \prettyref{sec:prelims}, we reduce
to simply-connected absolutely almost simple
groups. \prettyref{sec:building} recalls the 
necessary facts on Bruhat-Tits theory and Margaux's generalization of
Soul{\'e}'s theorem. In \prettyref{sec:knudson}, we extend the homology
computations of Knudson to groups other than $SL_n$. 

\emph{Acknowledgements:}  I have to apologize for the
numerous gaps in previous versions of this work. 
I would like to thank Mike Hopkins for pointing
out a critical error in an earlier version, and Christian Haesemeyer
and Aravind Asok for pointing out problems with Knudson's injectivity
argument in the rank two case. I also thank Aravind Asok, Christian
Haesemeyer, Roy Joshua and Fabien Morel for many useful comments and
their interest in the homotopy invariance question. 

\section{Preliminary reduction}
\label{sec:prelims}

In this section, we provide some preliminary reductions. More
precisely, \prettyref{thm:main} follows for all reductive groups  if
it can be shown for simply-connected almost simple  groups. The
arguments are fairly standard reductions, ubiquitous in the theory of
algebraic groups.

Recall from \cite{borel:1991:lag} the basic notions of the theory of
linear algebraic groups. In particular, the \emph{radical} $R(G)$ of a
group $G$ is the largest connected solvable normal subgroup of $G$,
and the \emph{unipotent radical} $R_u(G)$ of $G$ is the largest
connected unipotent normal subgroup of $G$. A connected group $G$ is called
\emph{reductive} if its unipotent radical is trivial, and it is called
\emph{semisimple} if its radical is trivial. 
A connected algebraic group $G$ is called \emph{simple} if it is 
non-commutative and has no nontrivial normal algebraic subgroups. It
is called \emph{almost simple} if its centre $Z$ is finite and the
quotient $G/Z$ is simple. 
A semisimple group is
called \emph{simply-connected}, if there is no nontrivial isogeny
$\phi:\tilde{G}\rightarrow G$.

The additive group is denoted by $\mathbb{G}_a$, and the
multiplicative group by $\mathbb{G}_m$. 
A \emph{torus} is a linear algebraic group $T$ defined over a field
$k$ which over the algebraic closure $\overline{k}$ is isomorphic to 
$\mathbb{G}_m^n$ for some $n$. 

We now show that the main theorem follows for reductive groups
if it can be proved for almost simple simply-connected groups. 
We first reduce to semisimple groups, the basic idea to keep in mind
is the sequence $SL_n\rightarrow GL_n\rightarrow \mathbb{G}_m$ which
reduces homotopy invariance for $GL_n$ to $SL_n$.

\begin{proposition}
To prove \prettyref{thm:main}, it suffices to consider the case where
$G$ is semisimple over $k$.
\end{proposition}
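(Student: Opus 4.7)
The plan is to exploit the canonical short exact sequence of algebraic $k$-groups
$$1 \to [G,G] \to G \to S \to 1,$$
where $[G,G]$ is the (semisimple) derived subgroup and $S = G/[G,G]$ is a torus, and compare Lyndon--Hochschild--Serre spectral sequences for $R = k$ and $R = k[t]$. I first record two preliminaries. For any torus $S$ over a field, the inclusion induces an isomorphism $S(k) \stackrel{\cong}{\to} S(k[t])$: in the split case this is $k[t]^\times = k^\times$, and the general case follows by Galois descent along a splitting extension of $k$. Moreover the hypothesis that the order of $\pi_1(G)$ is invertible in $k$ passes to $[G,G]$, so the semisimple case of \prettyref{thm:main} is available for $[G,G]$.

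Writing $Q(R) := G(R)/[G,G](R) \hookrightarrow S(R)$, the key step is to show $Q(k) = Q(k[t])$ as subgroups of the common group $S(k) = S(k[t])$. From the exact sequence of pointed sets
$$1 \to [G,G](R) \to G(R) \to S(R) \to H^1_{\etale}(R, [G,G]),$$
the subgroup $Q(R)$ is precisely the kernel of the boundary map into $H^1_{\etale}(R, [G,G])$. Thus $Q(k) = Q(k[t])$ reduces to injectivity of the pullback $H^1_{\etale}(k, [G,G]) \to H^1_{\etale}(k[t], [G,G])$ on the image of $S(k)$, which is a consequence of the $\mathbb{A}^1$-invariance of \'etale $H^1$ for semisimple groups over an infinite field (a classical result of Raghunathan, Raghunathan--Ramanathan, and Colliot-Th\'el\`ene--Ojanguren). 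This is the main obstacle, and I expect to dispatch it by citation rather than by reproving it.

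With $Q(k) = Q(k[t])$ in hand, I would then compare the Lyndon--Hochschild--Serre spectral sequences
$$E^2_{p,q}(R) = H_p\bigl(Q(R); H_q([G,G](R); \mathbb{Z})\bigr) \Rightarrow H_{p+q}(G(R); \mathbb{Z})$$
associated to $R = k$ and $R = k[t]$. The $Q(R)$-action on $H_q([G,G](R); \mathbb{Z})$ is by conjugation inside $G(R)$ and is therefore automatically compatible with the inclusion $G(k) \hookrightarrow G(k[t])$. Combined with the semisimple case of \prettyref{thm:main} applied to $[G,G]$, which supplies the isomorphism $H_q([G,G](k); \mathbb{Z}) \cong H_q([G,G](k[t]); \mathbb{Z})$, the map on $E^2$-pages is an isomorphism, and the comparison theorem for convergent spectral sequences yields the desired isomorphism $H_\bullet(G(k); \mathbb{Z}) \cong H_\bullet(G(k[t]); \mathbb{Z})$. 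The remainder beyond the cohomological input in the second paragraph is routine spectral-sequence bookkeeping, mirroring the toy case $SL_n \to GL_n \to \mathbb{G}_m$ flagged in the introduction.
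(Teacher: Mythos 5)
Your proof is correct and follows essentially the same route as the paper: decompose $G$ via the derived-group sequence $1 \to (G,G) \to G \to G/(G,G) \to 1$, use that a torus over a field has no nonconstant $k[t]$-points, and compare the two Hochschild--Serre spectral sequences. The one place you diverge is worth commenting on. The paper simply asserts that this sequence is split as a sequence of algebraic groups, so that $G(A)\to T(A)$ is surjective for every $k$-algebra $A$ and the extension of abstract groups is the obvious one; you instead replace $T(R)$ by the image $Q(R)$ of $G(R)$ and identify $Q(k)$ with $Q(k[t])$ inside $S(k)=S(k[t])$ via the boundary map to $H^1_{\etale}(R,[G,G])$. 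This is the more careful version, and it is in fact exactly the device the paper itself deploys one step later (\prettyref{prop:reduction}) to handle the non-surjectivity of $\tilde{G}(R)\to G(R)$. However, you invoke far more than you need: the injectivity of $H^1_{\etale}(k,[G,G])\to H^1_{\etale}(k[t],[G,G])$ is automatic, because evaluation at $t=0$ exhibits $k$ as a retract of $k[t]$, so the composite $H^1_{\etale}(k,-)\to H^1_{\etale}(k[t],-)\to H^1_{\etale}(k,-)$ is the identity. There is no need to cite $\Ao$-invariance of torsors (Raghunathan--Ramanathan, Colliot-Th\'el\`ene--Ojanguren); the retraction argument is all that is used at the analogous point in the paper. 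With that substitution your argument is complete and, if anything, slightly more robust than the printed one, since it does not rely on the splitting claim; your remark that the isomorphism $H_q([G,G](k))\cong H_q([G,G](k[t]))$ is equivariant for the conjugation action of $Q$ is precisely the point one must check before comparing $E^2$-pages.
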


\begin{proof}
For a reductive group $G$, we have a split extension of linear 
algebraic groups 
\begin{displaymath}
1\rightarrow (G,G)\rightarrow G\rightarrow G/(G,G)\rightarrow 1
\end{displaymath}
where $(G,G)$ denotes the commutator subgroup in the sense of linear
algebraic groups. The quotient $G/(G,G)$ is  a  torus. 
We denote $H=(G,G)$ and $T=G/(G,G)$, and obtain a split exact sequence 
$1\rightarrow H(A)\rightarrow G(A)\rightarrow T(A)\rightarrow 1$ for
any $k$-algebra $A$. 
Assuming that $A$ is smooth and essentially of finite type, we have an
isomorphism $T(A)\cong T(A[t])$.
From the Hochschild-Serre spectral sequence for the above group
extensions we conclude that $G(k)\rightarrow G(k[t])$ induces an
isomorphism on homology if $H(k)\rightarrow H(k[t])$ induces an
isomorphism on homology. But $H=(G,G)$ is a semisimple algebraic group
over $k$. 
\end{proof}

\begin{proposition}
\label{prop:reduction}
To prove \prettyref{thm:main}, it suffices to consider the case where
$G$ is almost simple simply-connected over $k$.
\end{proposition}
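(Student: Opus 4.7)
The preceding proposition reduces us to the case where $G$ is semisimple; we now further reduce to almost simple simply-connected groups in two steps.

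\textbf{Step 1 (From semisimple to simply-connected semisimple).} Let $\pi\colon\tilde G\to G$ be the universal cover, with kernel $\mu=\pi_1(G)$ a finite central $k$-subgroup scheme. By hypothesis $|\mu|$ is invertible in $k$, so $\mu$ is \'etale; in particular $\mu(k)=\mu(k[t])$, and $\Ao$-homotopy invariance of \'etale cohomology with torsion coefficients gives $H^1_\etale(k,\mu)\cong H^1_\etale(k[t],\mu)$. For $R\in\{k,k[t]\}$ the isogeny yields an exact sequence
$$1\to\mu(R)\to\tilde G(R)\to G(R)\to C_R\to 1$$
with $C_R\subseteq H^1_\etale(R,\mu)$, and the cokernels for $R=k$ and $R=k[t]$ are identified via the above isomorphism. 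Granting homotopy invariance (\prettyref{thm:main}) for $\tilde G$, one compares the central extensions
$$1\to\mu(R)\to\tilde G(R)\to\tilde G(R)/\mu(R)\to 1$$
via their Hochschild--Serre spectral sequences: since $\mu(R)$ and its integral homology are independent of $R$, a spectral-sequence comparison (by Zeeman's comparison theorem, or iterated applications of the five-term exact sequence) transfers the homology isomorphism from $\tilde G(R)$ to $\tilde G(R)/\mu(R)$. A second Hochschild--Serre comparison for $1\to\tilde G(R)/\mu(R)\to G(R)\to C_R\to 1$ then yields the desired isomorphism for $G(R)$.

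\textbf{Step 2 (From simply-connected semisimple to almost simple simply-connected).} Any simply-connected semisimple $k$-group $\tilde G$ decomposes as a finite direct product $\tilde G=\prod_i G_i$ of almost simple simply-connected $k$-groups. On $R$-points one has $\tilde G(R)=\prod_i G_i(R)$, and the K\"unneth formula for group homology transports the isomorphisms $H_\bullet(G_i(k),\mathbb{Z})\cong H_\bullet(G_i(k[t]),\mathbb{Z})$ across all factors to the corresponding isomorphism for $\tilde G$.

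\textbf{Main obstacle.} Step 1 is the delicate part: the isogeny $\tilde G\to G$ is controlled cleanly after inverting $|\mu|$, but the conclusion is about \emph{integral} homology, so the spectral-sequence comparisons must keep track of the $|\mu|$-primary part as well. The hypothesis that $|\pi_1(G)|$ be invertible in $k$ ensures that the auxiliary data $\mu(R)$, $H^1_\etale(R,\mu)$, and $H_\bullet(\mu(R),\mathbb{Z})$ are all independent of $R\in\{k,k[t]\}$, which is exactly what makes the iterated five-lemma and spectral-sequence comparison arguments go through; without this hypothesis, primes dividing $|\pi_1(G)|$ would obstruct the passage from $\tilde G$ to $G$, matching the restriction already built into the statement of \prettyref{thm:main}.
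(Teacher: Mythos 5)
Your argument is correct and follows essentially the same route as the paper: Künneth/Hochschild--Serre for the product of almost simple factors, the étale cohomology exact sequence for the central isogeny with $\Pi(k)\cong\Pi(k[t])$ and $H^1_{\etale}(k,\Pi)\cong H^1_{\etale}(k[t],\Pi)$, and a two-stage spectral-sequence comparison (the paper phrases your Zeeman step via the fibre sequence $B\tilde G(R)\to B(\tilde G(R)/\Pi(R))\to K(\Pi(R),2)$, which is the same device). The only assertion you leave unjustified is that the cokernels $C_k$ and $C_{k[t]}$ actually coincide under the identification of $H^1_{\etale}$; this needs the injectivity of $H^1_{\etale}(k,\tilde G)\to H^1_{\etale}(k[t],\tilde G)$, which the paper gets from the fact that $k$ is a retract of $k[t]$.
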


\begin{proof}
For a semisimple group $G$, there is an exact
sequence of algebraic groups
\begin{displaymath}
1\rightarrow\Pi\rightarrow \widetilde{G}\rightarrow G\rightarrow 1,
\end{displaymath}
where $\Pi$ is a finite central group scheme, and
$\widetilde{G}$ is a product of simply-connected 
almost simple groups. If we assume that \prettyref{thm:main} holds for
these simply-connected almost simple groups, it is also true for their
product, by a simple application of the Hochschild-Serre spectral
sequence.  

Now assume that the order of $\Pi$ is prime to the characteristic
of the field $k$, as in \prettyref{thm:main}. 
From the universal covering above we have an exact sequence
\begin{displaymath}
1\rightarrow \Pi(R) \rightarrow \tilde{G}(R) \rightarrow G(R)
\rightarrow H^1_{\etale}(R,\Pi)\rightarrow H^1_{\etale}(R,\tilde{G})
\end{displaymath}
for any $k$-algebra $R$. Then we have isomorphisms
\begin{displaymath}
\Pi(k)\cong \Pi(k[t]), \qquad\textrm{ and }\qquad
H^1_{\etale}(k,\Pi)\cong H^1_{\etale}(k[t],\Pi),
\end{displaymath}
by our assumption on the characteristic of the base field. 

Now the first part of the exact sequence above yields a morphism of
exact sequences
\begin{center}
  \begin{minipage}[c]{10cm}
    \xymatrix{
      1 \ar[r] & \Pi(k) \ar[r] \ar[d]_\cong &
      \tilde{G}(k)\ar[r]\ar[d] &       \tilde{G}(k)/\Pi(k)
      \ar[r] \ar[d] &  1 \\ 
      1 \ar[r] & \Pi(k[t]) \ar[r] & \tilde{G}(k[t]) \ar[r] &
      \tilde{G}(k[t])/\Pi(k[t]) \ar[r] &  1
    }
  \end{minipage}
\end{center}
Since $\Pi$ is in fact abelian, one can consider fibre sequences
\begin{displaymath}
B\tilde{G}(R)\rightarrow B\left(\tilde{G}(R)/\Pi(R)\right)
\rightarrow K(\Pi(R),2)
\end{displaymath}
for $R=k$ and $R=k[t]$. Then the associated Hochschild-Serre spectral
sequence implies that the morphism $\tilde{G}(k)/\Pi(k)\rightarrow
\tilde{G}(k[t])/\Pi(k[t])$ induces an isomorphism on homology, since
we argued before that the morphism $\tilde{G}(k)\rightarrow
\tilde{G}(k[t])$ induces an isomorphism on homology.

Since  
\begin{displaymath}
H^1_{\etale}(k,\tilde{G})\rightarrow
H^1_{\etale}(k[t],\tilde{G})
\end{displaymath}
 is injective ($k$ is a retract of
$k[t]$), the images of $G(k[t])$ and $G(k)$ in
$H^1_{\etale}(k,\Pi)\cong H^1_{\etale}(k[t],\Pi)$  are
equal - we denote these images by $\pi_0(G(k[t]))$ and $\pi_0(G(k))$,
respectively. Therefore we get a morphism of extensions 
\begin{center}
  \begin{minipage}[c]{10cm}
    \xymatrix{
      1 \ar[r] & \tilde{G}(k[t])/\Pi(k[t]) \ar[r] \ar[d] & G(k[t])
      \ar[r] \ar[d] & \pi_0(G(k[t])) \ar[r] \ar[d]^\cong & 1 \\
      1 \ar[r] & \tilde{G}(k)/\Pi(k) \ar[r] & G(k)
      \ar[r] & \pi_0(G(k)) \ar[r] & 1
    }
  \end{minipage}
\end{center}
The outer vertical arrows are isomorphisms on homology, therefore the
comparison theorem for the Hochschild-Serre spectral sequences implies
that we obtain an isomorphism on the middle arrow.
\end{proof}

Henceforth, we shall only consider linear algebraic groups $G$,
defined over $k$ which are almost simple and simply-connected. 

\section{Bruhat-Tits buildings and the Soul{\'e}-Margaux-theorem}
\label{sec:building}

In this section, we recall the basics of the theory of buildings which
will be needed in the remaining sections. The main references are
\cite{bruhat:tits:1972:building} and \cite{abramenko:brown}. 

Let $k$ be
a field. Then we equip the function field $K=k(t)$ with the valuation
$\omega_\infty(f/g)=\operatorname{deg}(g)-\operatorname{deg}(f)$,
with $t^{-1}$ as uniformizer. We denote by $\mathcal{O}$ the corresponding
discrete valuation ring. Alternatively, one can work with
$K=k((t^{-1}))$ and the corresponding valuation ring
$k[[t^{-1}]]$. The underlying simplicial complex of the building will
be the same, only the apartment system will be different. 

Let $G$ be a reductive group over $k$. Then we have two morphisms of
groups, the inclusion $G(\mathcal{O})\hookrightarrow G(K)$ and the
reduction $G(\mathcal{O})\rightarrow G(k)$.  

\subsection{BN-Pairs and buildings}

We will be concerned with affine buildings associated to reductive
groups over discretely valued fields. We recall the definition of
buildings based on the notion of BN-pairs. This theory is detailed in
\cite{abramenko:brown}, in particular Section 6.

\begin{definition}
A pair of subgroups $B$ and $N$ of a group $G$ is called a
\emph{BN-pair} if $B$ and $N$ generate $G$, the intersection $T:=B\cap
N$ is normal in $N$, and the quotient $W=N/T$ admits a set of
generators $S$ such that the following two conditions hold:
\begin{enumerate}[(BN1)]
\item For $s\in S$ and $w\in W$ we have $sBw\subseteq BswB\cup BwB$. 
\item For $s\in S$, we have $sBs^{-1}\nleq B$.
\end{enumerate}
The group $W$ is called the \emph{Weyl group} of the BN-pair. The
tuple $(G,B,N,S)$ is called \emph{Tits system}. 
\end{definition}

We now describe the BN-pair on $G(K)$ which will be relevant for
us. We mostly stick to the notation used in \cite{soule:1979:polynomial}. 
Choose a maximal torus $T\subseteq G$. This fixes two subgroups 
$T(k)\subseteq G(k)$ and $T(K)\subseteq G(K)$. 
Fix a choice of Borel subgroup $\overline{B}$ in $G(k)$ containing
$T(k)$. 
\begin{quote}
For the definition of the BN-pair, we let $B\subseteq G(K)$ be
the preimage of $\overline{B}$ under the reduction
$G(\mathcal{O})\rightarrow G(k)$. The group $N$ is defined as the
normalizer of $T(K)$ in $G(K)$.
\end{quote}
This is the usual construction, explained in detail for the case
$SL_n$ in \cite[Section 6.9]{abramenko:brown}. We will not recall the
proof that this indeed yields a BN-pair here.

We recall one particular description of the building associated to a
BN-pair from \cite[Section 6.2.6]{abramenko:brown}. Given a Tits
system $(G,B,N,S)$, a subgroup $P\subseteq G$ is called
\emph{parabolic} if it contains a conjugate of $B$. The subgroups of
$G$ which contain $B$ are called \emph{standard parabolic
  subgroups}. These are associated to subsets of $S$.  

The building $\Delta(G,B)$ for $(G,B,N,S)$ is the simplicial complex
associated to the ordered set of parabolic subgroups of $G$, ordered
by reverse inclusion. The group $G$ acts via conjugation. The
fundamental apartment is given by 
\begin{displaymath}
\Sigma=\{wPw^{-1}\mid w\in W, P\geq B\}.
\end{displaymath}
The other apartments are of course obtained by using conjugates of the
group $B$ above. Alternatively, the building can be described as the
simplicial complex associated to the ordered set of cosets of the
standard parabolic subgroups, with the group $G$ acting via
multiplication. 

\subsection{Soul{\'e}'s fundamental domain}

We continue to consider the BN-pair defined above. 
In the standard apartment $\Sigma$ of $\Delta(G,B)$, there is one
vertex fixed by $G(\mathcal{O})$. This vertex is denoted by
$\phi$. The fundamental chamber containing the vertex $\phi$ is given
by 
\begin{displaymath}
\mathcal{C}=\{P\mid P\geq B\}\subseteq \Sigma.
\end{displaymath}
The fundamental sector $\mathcal{Q}$ is the simplicial cone with
vertex $\phi$ which is generated by $\mathcal{C}$. 

The following theorem was proved in \cite{soule:1979:polynomial} and
subsequently generalized to isotropic simply-connected absolutely
almost simple groups, cf. \cite{margaux:2008:soule}.  

\begin{theorem}
\label{thm:soule}
The set $\mathcal{Q}$ is a simplicial fundamental domain for the
action of $G(k[t])$ on the Bruhat-Tits building $\Delta(G,B)$. In 
other words, any simplex of $\Delta(G,B)$ is equivalent under the
action of $G(k[t])$ to a unique simplex of $\mathcal{Q}$.
\end{theorem}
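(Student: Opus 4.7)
The plan is to follow Soulé's original strategy for $SL_n$, with the generalization to arbitrary isotropic simply-connected almost simple $G$ carried out by Margaux. One establishes existence (every simplex is $G(k[t])$-equivalent to one in $\mathcal{Q}$) and uniqueness separately, having already reduced to the simply-connected almost simple case by \prettyref{prop:reduction}; the anisotropic case is trivial since the building is then a point, so we may further assume $G$ is isotropic.

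For existence, the key statement is the group-theoretic decomposition
\[
G(K) \;=\; G(k[t]) \cdot \operatorname{Stab}_{G(K)}(\mathcal{Q}),
\]
from which it follows at once that every apartment, hence every simplex of $\Delta(G,B)$, is $G(k[t])$-equivalent to one meeting $\mathcal{Q}$ in a chamber. To establish this one fixes a cocharacter $\lambda:\Gm\to T$ pointing along the sector and gives an Iwasawa-type description of $\operatorname{Stab}_{G(K)}(\mathcal{Q})$ in terms of $T(K)$, the positive (relative) root groups, and the preimage of $\overline{B}$ in $G(\mathcal{O})$. Writing an arbitrary $g\in G(K)$ as $\gamma\cdot p$ with $\gamma\in G(k[t])$ and $p\in\operatorname{Stab}_{G(K)}(\mathcal{Q})$ is, in the split case, essentially the classical elementary divisor theorem over the PID $k[t]$: one clears denominators in root coordinates, working inductively on the height of the positive roots.

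For uniqueness, it suffices to rule out the existence of $g\in G(k[t])$ sending one simplex $\sigma\in\mathcal{Q}$ to a distinct $\sigma'\in\mathcal{Q}$. By the existence step one may arrange $g\in N\cap G(k[t])$. Using $T(K)\cap G(k[t])=T(k)$ together with the fact that any non-trivial affine Weyl translation along a nonzero cocharacter pushes points of $\mathcal{Q}$ out of $\mathcal{Q}$, one reduces the question to the action of the finite Weyl group on the fundamental chamber $\mathcal{C}$, where faithfulness of the action on chambers at infinity forces $\sigma=\sigma'$.

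The main obstacle is the existence half in the non-split isotropic case, where Soulé's Smith-normal-form argument for $SL_n$ does not apply directly. The required elementary-divisor style decomposition of $G(K)$ relative to $G(k[t])$ and the sector stabilizer must be carried out via Bruhat-Tits theory for the relative root datum of $G$ over $k$, exploiting the explicit structure of the relative root subgroup schemes of an isotropic simply-connected almost simple group. Since this is precisely the work done by Margaux in \cite{margaux:2008:soule}, the theorem follows by invoking that result in the generality to which we have reduced.
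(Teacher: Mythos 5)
The paper offers no proof of this theorem at all: it is quoted as a known result of Soul{\'e}, generalized by Margaux to isotropic simply-connected absolutely almost simple groups, which is exactly where your argument ends up as well. Your proposal therefore takes essentially the same approach --- an expository sketch of the existence/uniqueness strategy of the cited proofs, with the substantive content (in particular the non-split isotropic case) delegated to \cite{margaux:2008:soule} --- and is correct in the same sense that the paper's citation is.
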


\subsection{Stabilizers}

We are also interested in the subgroups which stabilize simplices in
the fundamental domain $\mathcal{Q}$. We have seen above that the
simplices correspond to standard parabolic subgroups. It turns out
that the stabilizer of the simplex corresponding to $G\geq P\geq B$ is
exactly $P$, cf. \cite[Theorem 6.43]{abramenko:brown}. In particular,
for the group $G(k[t])$, we find that the stabilizer of a simplex
$\sigma_P$ corresponding to a parabolic subgroup $P$ of $G(K)$ is
the following intersection
\begin{displaymath}
\operatorname{Stab}(\sigma(P))=G(k[t])\cap P.
\end{displaymath}

This implies a concrete description of the stabilizers,
cf. \cite[Paragraph 1.1]{soule:1979:polynomial}
resp. \cite[Proposition 2.5]{margaux:2008:soule}.  

\begin{proposition}
\label{prop:stabilizer}
Let $x\in \mathcal{Q}\setminus\{\phi\}$. We denote by
$\operatorname{Stab}(x)$ the stabilizer of $x$ in $G(k[t])$. 
\begin{enumerate}[(i)]
\item There is an extension of groups
\begin{displaymath}
1\rightarrow \operatorname{Stab}(x)\cap U_x(K)\rightarrow
\operatorname{Stab}(x) \rightarrow L_x(k)\rightarrow 1. 
\end{displaymath}
The group $L_x(k)$ is a reductive subgroup of $G(k)$, in fact it
is a Levi subgroup of a maximal parabolic subgroup of $G(k)$ for the
spherical BN-pair. The group $\operatorname{Stab}(x)\cap U_x(K)$ is a
split unipotent subgroup of $U_x(k[t])$.  
\item
The stabilizer of a simplex $\sigma$ is the intersection of the
stabilizers of the vertices $x$ of $\sigma$.
\item The stabilizers can be described using the valuation of the root
  system, cf. \cite[Section 1.1]{soule:1979:polynomial}. 
  In particular, in the notation of Soul{\'e}, we have
  \begin{displaymath}
    \Gamma_x=L_x(k)\cdot U_x(k[t]), \qquad L_x(k)=T(k)\cdot\langle
    x_\alpha(k)\mid \alpha(x)=0\rangle,
  \end{displaymath}
  \begin{displaymath}
    U_x(k[t])=\langle x_\alpha(u), u\in k[t], d\circ(u)\leq \alpha(x),
    \alpha(x)>0 \rangle.
  \end{displaymath}
  Withouth explaining all the notation in detail, this means that an
  element of $Z_x(k)=L_x(k)$ is a product of an element of the torus
  and certain root elements, where the roots only depend on the vertex
  $x$. An element in $U_x(k[t])$ is a product of certain root elements,
  the degree of the polynomials and the roots only depend on the
  vertex $x$. 
\end{enumerate}
\end{proposition}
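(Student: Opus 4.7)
The plan is to derive all three parts from the general theory of parahoric subgroups in Bruhat–Tits theory, combined with the explicit root–group description from Soul{\'e} (in the split Chevalley case) and its extension by Margaux (in the isotropic absolutely almost simple case). First I would identify $\operatorname{Stab}(x)$ with the intersection $G(k[t])\cap P_x$, where $P_x\subseteq G(K)$ is the parahoric fixing $x$ in the affine building. This identification is immediate from the description of stabilizers in the building associated to a BN-pair, cf. \cite[Theorem 6.43]{abramenko:brown}, already recalled just before the statement. Part (ii) then follows at once: a simplex in a building is fixed pointwise by a subgroup iff each of its vertices is, so its stabilizer is the intersection of the vertex stabilizers.

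For (i), I would invoke the standard Bruhat–Tits structure theorem: for a reductive group over the complete local field $K$, the parahoric subgroup $P_x$ sits in an extension
\begin{displaymath}
1\to U_x^+ \to P_x \to \overline{L}_x(k) \to 1,
\end{displaymath}
where $\overline{L}_x$ is a connected reductive $k$-group (the reductive quotient of the Bruhat–Tits group scheme associated to $x$) and $U_x^+$ is the pro-unipotent radical. Intersecting with $G(k[t])$ requires (a) showing that the reduction map is still surjective onto a subgroup $L_x(k)\subseteq \overline{L}_x(k)$, and (b) showing that the kernel is concentrated in $U_x(K)\cap G(k[t])$. Surjectivity is handled by choosing torus and root-subgroup lifts with polynomial entries, using the elementary observation that any element of $T(k)$ or of a root group $x_\alpha(c)$, $c\in k$, already lies in $G(k[t])$. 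The identification of $L_x$ as a Levi of a maximal $k$-parabolic then comes out of Bruhat–Tits theory applied at the ``special'' vertex $\phi$: the direction from $\phi$ to $x$ singles out a parabolic in the spherical building of $G(k)$, and $L_x$ is its Levi quotient.

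For (iii) and for the unipotent part of (i), I would work root-by-root. Using the valuation $\omega_\infty$ with uniformizer $t^{-1}$ on $K$, the affine root defining conditions translate, for $u\in K$, into $\omega_\infty(u)\geq -\alpha(x)$; intersecting with $k[t]$ replaces this with the degree bound $\deg u\leq \alpha(x)$. The roots with $\alpha(x)=0$ then give the Levi $L_x(k)=T(k)\cdot\langle x_\alpha(k)\mid\alpha(x)=0\rangle$, while those with $\alpha(x)>0$ contribute root subgroups $x_\alpha(u)$ with $u$ a polynomial of bounded degree. Since a finite-dimensional $k$-vector space of polynomials, viewed through a root-group parametrization, is a split unipotent $k$-group, an application of the Chevalley commutator formula shows that $\operatorname{Stab}(x)\cap U_x(K)$, generated by these groups in an appropriate order, is itself split unipotent.

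The main obstacle is the root-group bookkeeping in the non-split case: in the split Chevalley setting the whole argument is a direct rewriting of \cite[Paragraph 1.1]{soule:1979:polynomial}, but for a general isotropic simply-connected absolutely almost simple $G$ one must use the relative root system and the valuation of the relative root datum, where the root subgroups are no longer one-dimensional and the commutator relations involve non-commutative anisotropic kernels. This is precisely the content of \cite[Proposition 2.5]{margaux:2008:soule}, which I would quote to conclude.
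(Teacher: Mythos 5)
Your proposal is correct and follows essentially the same route as the paper, which itself gives no independent proof but refers to the stabilizer description $\operatorname{Stab}(\sigma)=G(k[t])\cap P$ from \cite[Theorem 6.43]{abramenko:brown} and then quotes \cite[Paragraph 1.1]{soule:1979:polynomial} and \cite[Proposition 2.5]{margaux:2008:soule} for the parahoric extension and the root-group/degree-bound description. Your expansion of the valuation bookkeeping and the deferral of the non-split case to Margaux is exactly where the paper places the burden as well.
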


\section{Homology of the stabilizers and Knudson's theorem}
\label{sec:knudson}

In this section, we describe the homology of the stabilizers of
simplices in Soul{\'e}'s domain. In \cite{knudson:1997:polynomial},
Knudson showed in the case $SL_n$ that the homology of the stabilizers
is determined by the homology of a Levi subgroup. We provide below a
generalization of this result. The results work in general for rings
with many units.

From \prettyref{prop:stabilizer}, we know that for a simplex $\sigma$
in the fundamental domain $\mathcal{Q}$, the stabilizer
$\Gamma_\sigma=\operatorname{Stab}(\sigma)$ of $\sigma$ in
$G(k[t])$ sits in an extension
\begin{displaymath}
1\rightarrow U_\sigma\rightarrow \Gamma_\sigma\rightarrow
L_\sigma\rightarrow 1, 
\end{displaymath}
where $U_\sigma$ is an abstract group contained in a unipotent
subgroup of $G(k[t])$ and $L_\sigma$ is the group of $k$-points of a
reductive  subgroup of $G$. The first thing we will show in this
section is that the induced morphism
$H_\bullet(\Gamma_\sigma,\mathbb{Z}) \rightarrow
H_\bullet(L_\sigma,\mathbb{Z})$ is an isomorphism. 
This is done via the Hochschild-Serre spectral 
sequence 
\begin{displaymath}
E^2_{p,q}=H_p(L_\sigma,H_q(U_\sigma))\Rightarrow H_{p+q}(\Gamma_\sigma)
\end{displaymath}
associated to the above group extension. To show the result, it
suffices to show that $H_p(L_\sigma,H_q(U_\sigma))=0$ for $q>0$. 

The basic idea for showing this latter assertion is to use the action
of $k^\times$ on the group $U_\sigma$, where $k^\times$ is embedded in
$L_\sigma$ as the $k$-points of a suitable subtorus. The group
$k^\times$ acts via multiplication on the various abelian subquotients
constituting the unipotent group $U_\sigma$, and an argument as in
\cite[Theorem 2.2.2]{knudson:2001:linear} shows that this homology is
trivial. This argument is detailed after some introductory remarks in
\prettyref{thm:stabilizer}. 

\subsection{A result of Suslin} A ring $A$ is an $S(n)$-ring if there
are $a_1,\dots,a_n\in A^\times$ such that the sum of each nonempty
subfamily is a unit. If $A$ is an $S(n)$-ring for all $n$, then $A$ is
said to \emph{have many units}. 

As explained in \cite[Section 2.2.1]{knudson:2001:linear}, the right
way to prove that 
\begin{displaymath}
H_p(GL_n(A),H_q(M_{n,m}(A),\mathbb{Z}))=0
\end{displaymath}
for $q>0$ if $A$ is a $\mathbb{Q}$-algebra is the following: we notice
that $M_{n,m}(A)$ is an abelian group, and therefore 
$H_q(M_{n,m}(A))=\bigwedge^qM_{n,m}(A)$. There exists a central
element $a\in GL_n(A)$ which acts on $M_{n,m}(A)$ by multiplication
with $a$, and therefore by multiplication with $a^q$ on
$H_q(M_{n,m}(A))$. This action is trivial, because $a$ is in the
center, and therefore $H_q(M_{n,m}(A))$ is annihilated by $a^q-1$. But
it is a $\mathbb{Q}$-vector space and therefore it is trivial for $q>0$. 

The following result due to
Nesterenko and Suslin \cite{nesterenko:suslin:1990:stability} is a
generalization of this center-kills-argument to rings with many units
in arbitrary characteristics. For more information on the proof of
this result, cf. \cite[Section 2.2.1]{knudson:2001:linear}.

\begin{proposition}
\label{prop:vanish}
Let $A$ be a ring with many units, and let $F$ be a prime field. Then
for all $i\geq 0$ and $j>0$, we have $H_i(A^\times,H_j(A^s,F))=0$,
where $A^\times$ acts diagonally on $A^s$. 
\end{proposition}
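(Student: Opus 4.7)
The plan is to follow the classical ``center kills'' argument of Suslin and Nesterenko, developed in detail in Section~2.2.1 of Knudson's book.

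Since $A$ is commutative, the group $A^\times$ is abelian, so every $\alpha\in A^\times$ is central and the center-kills lemma applies: the endomorphism of $H_i(A^\times,H_j(A^s,F))$ induced by the action of $\alpha$ on the coefficient module $H_j(A^s,F)$ coincides with the identity. Equivalently, any difference of the form $\alpha_\ast-\id$ acts as the zero endomorphism of $H_i(A^\times,H_j(A^s,F))$.

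Next I would analyse the endomorphism $\alpha_\ast$ on $H_j(A^s,F)$ using the Hopf-algebra structure induced by the group law on $A^s$: any group endomorphism of $A^s$ becomes a Hopf-algebra endomorphism of $H_\ast(A^s,F)$, and is therefore determined by its action on indecomposable generators. The cleanest case is $\alpha\in\mathbb{Z}\cdot 1_A\cap A^\times$, where the map $v\mapsto \alpha v$ factors as the diagonal $A^s\to (A^s)^\alpha$ followed by addition; consequently $\alpha_\ast$ acts on $H_j(A^s,F)$ as the scalar $\alpha^j\in F$, and $\alpha_\ast-\id$ is the scalar $\alpha^j-1\in F$. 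When this scalar is nonzero in $F$, the vanishing of $H_i(A^\times,H_j(A^s,F))$ follows immediately, since the $F$-vector space in question is annihilated by a unit of $F$.

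The remaining cases, where no integer scalar $\alpha$ with $\alpha^j-1$ nonzero in $F$ is available as a unit in $A$, are handled by the many-units hypothesis. The $S(n)$-ring condition, for $n$ sufficiently large relative to $j$, produces enough genuine units of $A$ to replace the integer $\alpha$ above by a carefully chosen element of $A^\times$ and to run the analogous argument. The construction is combinatorial: one produces a specific $\mathbb{Z}$-linear combination of elements of $A^\times$ whose action on $H_j(A^s,F)$ is annihilated on $H_i(A^\times,-)$ by center-kills while remaining invertible as an $F$-linear endomorphism of $H_j(A^s,F)$, forcing the desired vanishing.

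The main obstacle is precisely this transition from the easy integer-scalar argument to the general case, in particular when the prime subfield of $A$ does not supply a useful $\alpha$ (so that the action of $\alpha_\ast$ on $H_j(A^s,F)$ cannot be made a simple scalar). Here one genuinely needs the full strength of the many-units hypothesis, together with the combinatorial construction of a center-killing element; the details are worked out in the Suslin--Nesterenko papers and in Section~2.2.1 of Knudson's book, to which I would defer for the technical verification.
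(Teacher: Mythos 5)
The paper does not actually prove this proposition: it is quoted from Nesterenko--Suslin with a pointer to Section~2.2.1 of Knudson's book, preceded by essentially the same informal center-kills sketch for $\mathbb{Q}$-algebras that you give, so your treatment (sketch of the easy case plus deferral of the many-units combinatorics to the literature) matches the paper's. One caveat: your intermediate claim that an integer unit $\alpha$ acts on $H_j(A^s,F)$ as the scalar $\alpha^j$ is only valid for $F=\mathbb{Q}$ --- over $F=\mathbb{F}_p$ the Hopf algebra $H_*(A^s,\mathbb{F}_p)$ has divided-power/Bockstein generators in even degrees on which $\mu_\alpha$ acts by lower powers of $\alpha$, so $\alpha_*-\mathrm{id}$ is at best an equivariant endomorphism whose invertibility must be checked degree by degree --- but since you defer the positive-characteristic case to the Nesterenko--Suslin many-units argument anyway, this imprecision does not undermine the proposal.
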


The same conclusion also obtains for actions of $A^\times$ via
non-zero powers of units, cf.  \cite[Lemma
9]{hutchinson:1990:matsumoto}.  

\subsection{Example: orthogonal groups} We explain the procedure using 
the special case of orthogonal groups from
\cite{vogtmann:1979:stability}. For the groups $O_{n,n}$ over a field
$k$ of characteristic $\neq 2$, there are maximal parabolic subgroups
$P_I$ which have a non-abelian unipotent radical. They have the
following general form, 
cf. \cite[p. 21]{vogtmann:1979:stability}:  
\begin{displaymath}
\left(\begin{array}{ccc}
A & \ast & \ast \\
0 & B & \ast \\
0 & 0 & {}^tA^{-1}
\end{array}\right),
\end{displaymath}
where $A\in GL_p(k)$, $B\in O_{n-p,n-p}(k)$ and there are some
additional conditions on the $\ast$-terms ensuring that the whole
matrix is in $O_{n,n}(k)$. It is proved on p.34 of that paper that the
unipotent subgroup $N$ of $P_I$ sits in an exact sequence
\begin{displaymath}
1\rightarrow [N,N]\rightarrow N\rightarrow N/[N,N]\rightarrow 1
\end{displaymath}
with the outer terms $[N,N]$ and $N/[N,N]$ abelian groups. It is also
proved that the torus
\begin{displaymath}
\operatorname{diag}(\underbrace{a,\dots,a}_{p},\underbrace{1,\dots,1}_{2n-2p},
\underbrace{a^{-1},\dots,a^{-1}}_{p}) 
\end{displaymath}
acts via multiplication with $a$ on $N/[N,N]$ and multiplication with
$a^2$ on $[N,N]$.

We apply the Hochschild-Serre spectral sequence for the extension
\begin{displaymath}
1\rightarrow [N,N]\rightarrow P_I\rightarrow P_I/[N,N]\rightarrow 1.
\end{displaymath}
This has the following form: 
\begin{displaymath}
H_p(P_I/[N,N],H_q([N,N]))\Rightarrow H_{p+q}(P_I).
\end{displaymath}
To prove that $H_p(P_I/[N,N],H_q([N,N]))=0$ for $q>0$, we use another
Hochschild-Serre spectral sequence for the torus action:
\begin{displaymath}
H_p(P_I/[N,N]/k^\times,H_j(k^\times,H_q([N,N]))
\Rightarrow H_{p+j}(P_I/[N,N],H_q([N,N])).
\end{displaymath}
Since the action of $k^\times$ on $[N,N]$ is via multiplication by
squares, the result of Suslin, cf. \prettyref{prop:vanish}, implies
$H_p(P_I/[N,N],H_q([N,N]))=0$ for $q>0$.

The morphism $P_I\rightarrow P_I/[N,N]$ thus induces an isomorphism on
homology. A similar argument applied to the extension
\begin{displaymath}
1\rightarrow N/[N,N]\rightarrow P_I/[N,N]\rightarrow P_I/N\rightarrow 1
\end{displaymath}
implies that the morphism $P_I/[N,N]\rightarrow P_I/N$ also induces an
isomorphism on homology. 

This amounts to a proof of \cite[Proposition
2.2]{vogtmann:1979:stability} for  infinite fields of characteristic
$p\neq 2$. We obtain the following strengthening of Vogtmann's
stability result, making explicit a remark in \cite[Section
2.4.1]{knudson:2001:linear}.  

\begin{corollary}
\label{cor:vogtmann}
Let $k$ be an infinite field of characteristic $\neq 2$. Then the
induced morphism
\begin{displaymath}
H_i(O_{n,n}(k),\mathbb{Z})\rightarrow
H_i(O_{n+1,n+1}(k),\mathbb{Z}) 
\end{displaymath}
is surjective for $n\geq 3i+1$ and an isomorphism for $n\geq 3i+3$. 
\end{corollary}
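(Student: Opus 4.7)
The plan is to follow Vogtmann's original stability argument from \cite{vogtmann:1979:stability} essentially verbatim, feeding in the strengthened version of her Proposition~2.2 just established for infinite fields of characteristic $\neq 2$.

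First I would recall Vogtmann's setup. She analyses the action of $O_{n,n}(k)$ on a simplicial complex built from the split hyperbolic subspaces of the standard quadratic space, which she shows is highly connected in a range linear in $n$. The associated isotropy spectral sequence has $E^1$-term expressible in terms of the homology of simplex stabilizers, and these stabilizers are precisely the maximal parabolics $P_I$ of the form displayed above (and their intersections along the face structure).

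Next I would compare the isotropy spectral sequences for $O_{n,n}(k)$ and $O_{n+1,n+1}(k)$ side by side. Using the isomorphism $H_\bullet(P_I,\mathbb{Z})\cong H_\bullet(L_I,\mathbb{Z})$ for $L_I\cong GL_p(k)\times O_{n-p,n-p}(k)$ the Levi quotient, the $E^1$-entries become products of homology groups of general linear and smaller orthogonal groups. Invoking the known homology stability for $GL_p(k)$ due to Suslin on the $GL_p$-factor, and induction on $n$ for the orthogonal factor, one reads off the stability ranges $n\geq 3i+1$ for surjectivity and $n\geq 3i+3$ for isomorphism from the standard degree-count in the spectral sequence comparison.

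The key observation making all of this routine is that the only place where Vogtmann's original argument imposed a restrictive hypothesis on the field was in her Proposition~2.2 on parabolic homology; the connectivity estimate for the complex of split hyperbolic subspaces and the inductive comparison of spectral sequences are independent of $k$. The main obstacle, such as it is, is therefore not conceptual but bookkeeping: one must check that no other step in Vogtmann's paper secretly needs a finer assumption on $k$, and that the stability ranges $3i+1$ and $3i+3$ really do drop out unchanged from the connectivity bound once the parabolic input is no longer the bottleneck. Both checks are straightforward because the ranges are dictated by the connectivity of the simplicial complex rather than by the parabolic computation.
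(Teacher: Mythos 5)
Your proposal matches the paper's argument: the text preceding the corollary establishes Vogtmann's Proposition~2.2 (the isomorphism $H_\bullet(P_I,\mathbb{Z})\cong H_\bullet(P_I/N,\mathbb{Z})$) for infinite fields of characteristic $\neq 2$ via the torus action on $[N,N]$ and $N/[N,N]$ together with \prettyref{prop:vanish}, and the corollary is then obtained exactly as you describe, by feeding this into Vogtmann's original connectivity and spectral-sequence argument, whose remaining steps and stability ranges are independent of the field. No substantive difference from the paper's route.
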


\begin{remark}
\begin{enumerate}[(i)]
\item The above stabilization result is the one obtained via
  Vogtmann's argument in \cite{vogtmann:1979:stability} using the
  improved computation of the homology of stabilizers. Better
  stabilization results for orthogonal groups are available in the
  work of Essert, cf. \cite{essert}. In Essert's work, dealing with
  the homological contribution from the unipotent radical as above is
  not necessary - he uses opposition complexes, where stabilizers are
  Levi subgroups instead of the full parabolic subgroups.
\item
The above example for orthogonal groups is an instance of a more
general result which can be found e.g. in \cite{azad:barry:seitz}. Let
$G$ be a reductive group over $k$, let $\Phi$ be the associated
root system and assume that $\operatorname{char}k$ is not equal to $2$
for $\Phi$ doubly laced resp. not equal to $2$ or $3$ for $\Phi$
triply laced. Let $P$ be a parabolic subgroup associated to a
subset $I\subseteq\Phi$ of simple roots, and let $U$ be the unipotent
radical of $P$. Then the length of the descending central series of
$U$ equals $\sum_{\alpha\in I}m(\alpha)$ where $\alpha$ is the
multiplicity of $\alpha$ in the highest root $\widetilde{\alpha}$ of
$\Phi$. 

In the above example of orthogonal groups, the root system is of type
$D_n$. Numbering the simple roots
$\alpha_1,\dots,\alpha_{n-1},\alpha_n$ such that $\alpha_1$,
$\alpha_{n-1}$ and $\alpha_n$ correspond to the end-points of the
Dynkin diagram, the longest root is
$$
\widetilde{\alpha}=
\alpha_1+2\alpha_2+\cdots+2\alpha_{n-2}+\alpha_{n-1}+\alpha_n.
$$
The parabolic subgroups discussed in the above example are the ones
corresponding to roots $\alpha_2,\dots,\alpha_{n-2}$. 
\end{enumerate}
\end{remark}

\subsection{Homology of the stabilizers} We now want to compute the
homology of the stabilizers. We formulate the proof with the
$k$-points of the stabilizers, where $k$ is an infinite field. The
same arguments show the result more generally for an integral
domain $R$ with many units having quotient field $k$. The goal is to
compute the homology of the stabilizer $\Gamma_\sigma(k[t])$. The Levi
subgroup $L_\sigma$ is defined as 
$L_\sigma=Z_G(S_\sigma)$, i.e. as the centralizer of a split torus
$S_\sigma$ in $G$ associated to the simplex $\sigma$. We note that it
follows from this definition that there is a normal central torus in
$L_\sigma$. 
Now consider a subtorus $\mathbb{G}_m\rightarrow L_\sigma$. If the
corresponding abstract group $k^\times$ acts trivially on the
$k$-points of a unipotent subgroup $U\subseteq G(k[t])$, then
$U\subseteq Z_G(\mathbb{G}_m)$ and hence $U$ is contained in
$L_\sigma$. Therefore, for any unipotent subgroup  $U\subseteq
\Gamma_\sigma$ which is not contained in $L_\sigma$, there has to
exist a torus $\mathbb{G}_m\subseteq L_\sigma$ such that the
corresponding group of $k$-points $k^\times$ acts nontrivially on the
$k$-points of $U$.    

Note that the unipotent radicals $U_\sigma$ of parabolic subgroups of
$G$ associated to the simplex $\sigma$ are actually split, i.e. there
is a filtration  
\begin{displaymath}
U_\sigma=U_1\supseteq U_2\supseteq\cdots\supseteq U_n=\{1\}
\end{displaymath}
with each $U_n/U_{n+1}$ being isomorphic to $\mathbb{G}_a$. Since the
automorphism group of $\mathbb{G}_a$ is $\mathbb{G}_m$, the
multiplicative group $\mathbb{G}_m$ can only act via 
\begin{displaymath}
(a\in k^\times,u\in k)\mapsto a^n u
\end{displaymath}
for some $n$.
We have thus established the following: 

\begin{lemma}
\label{lem:action}
Let $U_\sigma$ be the unipotent radical of the stabilizer
$\Gamma_\sigma$, and let 
\begin{displaymath}
U_\sigma=U_1\supseteq U_2\supseteq\cdots\supseteq U_n=\{1\}
\end{displaymath}
be a filtration such that $U_i/U_{i+1}\cong\mathbb{G}_a$. For each
$i$, there exists a central embedding $k^\times\rightarrow L_\sigma$
and a number $n_i$ such that $a\in k^\times$ acts on $U_i/U_{i+1}$ via
multiplication with $a^{n_i}$.
\end{lemma}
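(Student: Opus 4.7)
The plan is to exploit the $S_\sigma$-action on $U_\sigma$ by conjugation, using that $L_\sigma=Z_G(S_\sigma)$ with $S_\sigma$ a split torus contained in the centre of $L_\sigma$. Any cocharacter of $S_\sigma$ then automatically lands in this central torus, providing a central embedding $\mathbb{G}_m\to L_\sigma$. First I would replace the given filtration by an $S_\sigma$-stable refinement in which each $\mathbb{G}_a$-subquotient is a single weight space: the successive abelian subquotients of the descending central series of $U_\sigma$ are algebraic representations of the split torus $S_\sigma$, hence decompose into weight spaces, and a refinement whose $\mathbb{G}_a$-subquotients are individual eigenlines exists.

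With the filtration set up this way, $S_\sigma$ acts on $U_i/U_{i+1}\cong\mathbb{G}_a$ through a single character $\chi_i\colon S_\sigma\to\mathbb{G}_m$. Next I would check that $\chi_i$ is non-trivial: otherwise $U_i/U_{i+1}$ would be fixed pointwise by $S_\sigma$ and therefore contained in $Z_G(S_\sigma)=L_\sigma$, which is impossible since, at the algebraic level, $\Gamma_\sigma$ sits inside the semidirect product $L_\sigma\ltimes U_\sigma$, and a non-trivial unipotent $\mathbb{G}_a$ cannot lie in the reductive group $L_\sigma$. This is precisely the non-triviality flagged in the paragraph preceding the lemma.

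Finally, since $S_\sigma$ is split, its character and cocharacter lattices pair perfectly, so I would pick a cocharacter $\lambda_i\colon\mathbb{G}_m\to S_\sigma$ with $\chi_i\circ\lambda_i$ non-trivial. Any non-trivial endomorphism of $\mathbb{G}_m$ has the form $a\mapsto a^{n_i}$ for some integer $n_i\neq 0$, and the composition $\mathbb{G}_m\to S_\sigma\hookrightarrow L_\sigma$ is the required central embedding acting as multiplication by $a^{n_i}$ on $U_i/U_{i+1}$. The main technical obstacle I anticipate is the refinement step: an arbitrary filtration with $\mathbb{G}_a$-subquotients need not be $S_\sigma$-stable, so one must verify that, without loss of generality, the filtration relevant for the intended application of the lemma can be chosen adapted to the weight decomposition, which is precisely what the split structure of $U_\sigma$ asserted in the preceding paragraph guarantees.
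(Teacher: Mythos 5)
Your argument is essentially the paper's own: the paper establishes the lemma in the two paragraphs preceding its statement by the same combination of (a) the non-triviality argument via $Z_G(\mathbb{G}_m)$ versus $L_\sigma=Z_G(S_\sigma)$, (b) the splitness of $U_\sigma$ with $\mathbb{G}_a$-subquotients, and (c) the fact that a torus can only act on $\mathbb{G}_a$ through a character $a\mapsto a^{n}$, with centrality coming from $S_\sigma$ being central in its own centralizer. Your explicit refinement of the filtration to an $S_\sigma$-stable one with eigenline subquotients is a point the paper glosses over, but it is a tightening of the same argument rather than a different route.
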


Note that above we are only talking about algebraic groups, i.e. about
the unipotent radical of parabolic subgroups of $G(k(t))$. However,
since the $k[t]$-points of the torus are $k^\times$, the action
preserves the degree filtration of the $k[t]$-points of unipotent
radicals. In particular, the action described above restricts to an
action of $k^\times$ on the unipotent part $U_\sigma$ of the
stabilizer subgroup $\Gamma_\sigma$, for any simplex
$\sigma\in\mathcal{Q}$. 

\begin{example}
\begin{enumerate}[(i)]
\item
The simplest example of this situation is the embedding 
\begin{displaymath}
R^\times\rightarrow SL_{n+m}:a\mapsto
\operatorname{diag}(\underbrace{a^m,\dots,a^m,}_n
\underbrace{a^{-n},\dots, a^{-n}}_m).
\end{displaymath}
The centralizer of this torus is the Levi subgroup of a maximal
parabolic subgroup which is the intersection of the following subgroup
with $SL_{n+m}$:
\begin{displaymath}
\left(
\begin{array}{cc}
GL_n & 0 \\ 0 & GL_m
\end{array}
\right).
\end{displaymath}
The corresponding parabolic subgroup has the form 
\begin{displaymath}
\left(
\begin{array}{cc}
GL_n & M \\ 0 & GL_m
\end{array}
\right)\cap SL_{n+m}
\end{displaymath}
and the torus acts on $M$ via multiplication with $a^{m+n}$,
cf. \cite{hutchinson:1990:matsumoto}. 
\item
Another example of such a situation is the one discussed in the proof
of \prettyref{cor:vogtmann}, cf. also
\cite[p. 34]{vogtmann:1979:stability}. In these cases, the unipotent
radical of a maximal parabolic of a split orthogonal or symplectic
group is not abelian, and the torus acts via different
powers on the steps of the central series.
\end{enumerate}
\end{example}

The above actions now allow to compute the $E_2$-term of the
Hochschild-Serre spectral sequence.
This is done by using the composition series of $U_I$, which induces a
sequence 
\begin{displaymath}
\Gamma_I\rightarrow \Gamma_I/\mathbb{G}_a=\Gamma_I/U_n\rightarrow
\Gamma_I/U_2\rightarrow \cdots\rightarrow \Gamma_I/U_I=L_I.
\end{displaymath}
We will show below that each step induces isomorphisms on homology. The
argument is a generalization of the proof of \cite[Corollary 
3.2]{knudson:1997:polynomial}. 

The following theorem now describes the homology of the stabilizers of
the action of $G(k)$ on  the Bruhat-Tits building. 

\begin{theorem}
\label{thm:stabilizer}
Let $R$ be an integral domain with many units and denote by $k=Q(R)$
its field of fractions. 
The group $G(R[t])$ acts on the Bruhat-Tits building associated
to the group $G(k(t))$, 
and we consider the stabilizer group $\Gamma_\sigma$ of a simplex
$\sigma\in\mathcal{Q}$. Then the morphism
\begin{displaymath}
H_\bullet(\Gamma_\sigma,\mathbb{Z})\rightarrow
H_\bullet(L_\sigma,\mathbb{Z})
\end{displaymath}
induced from the projection in \prettyref{prop:stabilizer} is an
isomorphism. 
\end{theorem}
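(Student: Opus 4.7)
The plan is to apply the Hochschild--Serre spectral sequence
\[
E^2_{p,q} = H_p(L_\sigma, H_q(U_\sigma, \mathbb{Z})) \Rightarrow H_{p+q}(\Gamma_\sigma, \mathbb{Z})
\]
associated to the extension $1 \to U_\sigma \to \Gamma_\sigma \to L_\sigma \to 1$ of \prettyref{prop:stabilizer} and to show it collapses onto the base line; equivalently, that $H_p(L_\sigma, H_q(U_\sigma, \mathbb{Z})) = 0$ for every $q > 0$. The edge homomorphism will then be the asserted isomorphism.

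Since $U_\sigma$ is non-abelian in general, I would not tackle this head-on. Instead, I would refine the filtration of \prettyref{lem:action} to a central series that is also stable under $L_\sigma$ (possible by reductivity of $L_\sigma$ acting algebraically on the unipotent $U_\sigma$), so that each $U_i$ is normal in $\Gamma_\sigma$ and $U_\sigma$ acts trivially by conjugation on every subquotient $U_i/U_{i+1}$. Factoring $\Gamma_\sigma \twoheadrightarrow L_\sigma$ as a tower
\[
\Gamma_\sigma = \Gamma_\sigma/U_n \to \Gamma_\sigma/U_{n-1} \to \cdots \to \Gamma_\sigma/U_1 = L_\sigma,
\]
it suffices to show that each step induces an isomorphism on integral homology. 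Each step fits in an extension with abelian kernel $U_i/U_{i+1}$, and a second use of Hochschild--Serre reduces the task to the vanishing
\[
H_p(\Gamma_\sigma/U_i, H_q(U_i/U_{i+1}, \mathbb{Z})) = 0 \quad \text{for } q > 0.
\]

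By the choice of filtration the action of $\Gamma_\sigma/U_i$ on $U_i/U_{i+1}$ factors through $L_\sigma$, so the central embedding $\mathbb{G}_m \hookrightarrow L_\sigma$ furnished by \prettyref{lem:action} pulls back to a central subgroup of $\Gamma_\sigma/U_i$. A further Hochschild--Serre spectral sequence
\[
H_p\bigl((\Gamma_\sigma/U_i)/k^\times, H_s(k^\times, H_q(U_i/U_{i+1}))\bigr) \Rightarrow H_{p+s}(\Gamma_\sigma/U_i, H_q(U_i/U_{i+1}))
\]
then reduces everything to showing $H_s(k^\times, H_q(U_i/U_{i+1})) = 0$ for $q > 0$. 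As an abstract group $U_i/U_{i+1}$ is a finite-dimensional $k$-vector space on which $k^\times$ acts by multiplication with $a^{n_i}$, $n_i \neq 0$, and hence on $H_q(U_i/U_{i+1}, \mathbb{Z}) = \bigwedge^q(U_i/U_{i+1})$ by $a^{qn_i}$. The Nesterenko--Suslin vanishing of \prettyref{prop:vanish}, together with Hutchinson's extension to actions via non-zero powers of units, delivers the required vanishing.

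The principal obstacle I anticipate is the passage from the prime-field coefficients in \prettyref{prop:vanish} to integer coefficients. I expect this to be handled by a standard Bockstein/universal coefficient argument using the sequences $0 \to \mathbb{Z} \to \mathbb{Z} \to \mathbb{Z}/\ell \to 0$ for all primes $\ell$, combined with rational vanishing; the latter is automatic once one chooses a unit $a \in R^\times$ for which $a^{qn_i} - 1$ is again a unit, since then the same element both annihilates and acts invertibly on the relevant homology. A secondary point, already foreshadowed in the paragraph after \prettyref{lem:action}, is that the action of $k^\times$ preserves the degree-constrained subgroup of $U_\sigma(k[t])$ on which the filtration was defined, which is immediate from the formulae for $U_x(k[t])$ in \prettyref{prop:stabilizer}(iii).
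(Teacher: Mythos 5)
Your proposal follows essentially the same route as the paper: factor $\Gamma_\sigma\rightarrow L_\sigma$ through the tower of quotients by the terms of the composition series of $U_\sigma$, reduce each step via Hochschild--Serre to a vanishing statement for the homology of the abelian subquotient, and kill that homology with a second Hochschild--Serre for the central copy of $R^\times$ together with the Nesterenko--Suslin vanishing (\prettyref{prop:vanish}) and Hutchinson's variant for actions by nonzero powers. The only cosmetic differences are that the subquotients are free $R$-modules of polynomial coefficients of bounded degree rather than $k$-vector spaces (so $H_q(U_i/U_{i+1},\mathbb{Z})$ is not literally $\bigwedge^q$, which is why one reduces to prime-field coefficients exactly as you and the paper both do), and the paper simply states the reduction to prime fields rather than spelling out the universal-coefficient argument.
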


\begin{proof}
Consider the composition series of
$U_\sigma$: 
\begin{displaymath}
U_\sigma=U_1\supseteq U_2\supseteq\cdots\supseteq U_n=\{1\}.
\end{displaymath}
More precisely, the composition series of the unipotent group as an
algebraic group induces a similar filtration of the unipotent part of
the stabilizer, which is defined inside the unipotent radical by
degree bounds as in \prettyref{prop:stabilizer}.
This induces a sequence of group homomorphisms 
\begin{displaymath}
\Gamma_\sigma\rightarrow \Gamma_\sigma/U_n\rightarrow
\Gamma_\sigma/U_2\rightarrow \cdots\rightarrow \Gamma_\sigma/U_\sigma=L_\sigma.
\end{displaymath}
Each step in this sequence is a quotient by a subgroup
of $\mathbb{G}_a(R[t])$ in $\Gamma_\sigma/U_i$. 
It therefore suffices to show that each such morphism induces an
isomorphism on homology. This is done via the Hochschild-Serre
spectral sequence, which then looks like
\begin{displaymath}
H_p(\Gamma_\sigma/U_{i+1},H_q(U_i/U_{i+1},\mathbb{Z}))\Rightarrow 
H_p(\Gamma_\sigma/U_i,\mathbb{Z}). 
\end{displaymath}
Thus it suffices to show for any prime field $F$, we have
\begin{displaymath}
H_p(\Gamma_\sigma/U_{i+1},H_q(U_i/U_{i+1},F))=0
\end{displaymath}
for $q>0$. But by \prettyref{lem:action}, there is a central
embedding $R^\times\rightarrow \Gamma_\sigma/U_{i+1}$ such that $a\in
R^\times$ acts on $U_i/U_{i+1}$ via multiplication by some non-zero
power of $a$. 

We have an associated  Hochschild-Serre spectral sequence
$$
E^2_{j,l}=H_j((\Gamma_\sigma/U_{i+1})/R^\times,H_l(R^\times,H_q(U_i/U_{i+1},F)))
\Rightarrow
$$
$$\Rightarrow
H_{j+l}(\Gamma_\sigma/U_{i+1},H_q(U_i/U_{i+1},F)).
$$
From \prettyref{prop:vanish}, we obtain that
$H_l(R^\times,H_q(U_i/U_{i+1},F))=0$ for $q>0$, which finishes
the proof.
\end{proof}

\subsection{The theorem of Knudson}
We will now  prove homotopy invariance in the one-variable case. The
following is a generalization of \cite[Corollary
4.6.3]{knudson:2001:linear}.

\begin{theorem}
\label{thm:knudson}
Let $k$ be an infinite field and let $G$ be a connected reductive
group over $k$. Then the inclusion $k\hookrightarrow k[t]$ induces an
isomorphism  
$$
H_\bullet(G(k),\mathbb{Z})\stackrel{\cong}{\longrightarrow}
H_\bullet(G(k[t]),\mathbb{Z}),
$$
if the order of the fundamental group of $G$ is invertible in $k$. 
\end{theorem}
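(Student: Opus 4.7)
The strategy is to run the equivariant homology spectral sequence for the action of $G(k[t])$ on the contractible Bruhat--Tits building $\Delta(G,B)$, restrict the indexing set to Soul\'e's fundamental domain $\mathcal{Q}$ via \prettyref{thm:soule}, replace stabilizer homology by Levi homology via \prettyref{thm:stabilizer}, and then show that what remains computes $H_\bullet(G(k))$.

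By \prettyref{prop:reduction}, I may assume that $G$ is simply-connected and almost simple. The affine building $\Delta(G,B)$ is CAT$(0)$ and hence contractible, so the isotropy spectral sequence for the $G(k[t])$-action yields
$$
E^1_{p,q} = \bigoplus_{\sigma \in \mathcal{Q}_p} H_q(\Gamma_\sigma, \mathbb{Z}) \Longrightarrow H_{p+q}(G(k[t]), \mathbb{Z}),
$$
where $\Gamma_\sigma$ is the stabilizer in $G(k[t])$ of a simplex $\sigma \in \mathcal{Q}$ and, by \prettyref{thm:stabilizer}, each $H_q(\Gamma_\sigma)$ may be replaced by $H_q(L_\sigma)$. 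At the apex $\phi$ a direct computation gives $\Gamma_\phi = G(k[t]) \cap G(\mathcal{O}) = G(k)$, since $k[t] \cap \mathcal{O} = k$ inside $k(t)$, so $L_\phi = G(k)$. The inclusion $\{\phi\} \hookrightarrow \mathcal{Q}$ then furnishes a morphism from the degenerate spectral sequence with $E^2_{0,q} = H_q(G(k))$ to the one above, realising on abutments the map induced by $k \hookrightarrow k[t]$. It therefore suffices to show that for each fixed $q$ the augmented row complex
$$
\cdots \to \bigoplus_{\sigma \in \mathcal{Q}_p} H_q(L_\sigma) \to \cdots \to \bigoplus_{v \in \mathcal{Q}_0} H_q(L_v) \xrightarrow{\varepsilon} H_q(G(k)) \to 0,
$$
with $\varepsilon$ given by the sum of the inclusions $L_v \hookrightarrow G(k)$, is exact.

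Establishing this exactness is the main obstacle. Following Knudson's strategy for $SL_n$, I would exploit the fact, visible from \prettyref{prop:stabilizer}(iii), that $L_\sigma$ is determined by which walls of the standard apartment pass through $\sigma$, together with the structure of $\mathcal{Q}$ as an infinite simplicial cone based at $\phi$. Concretely, one filters $\mathcal{Q}$ by sector shells around $\phi$ and proves inductively that each shell addition is a quasi-isomorphism of augmented complexes: within each newly added shell the simplices can be organised into pairs $(\sigma, \tau)$ with $\tau$ a free face of $\sigma$ and $L_\sigma = L_\tau$, so that the corresponding contributions to the row complex cancel in pairs. Iterating this collapse shell-by-shell leaves only the apex contribution $H_q(L_\phi) = H_q(G(k))$, which yields the required exactness.

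The combinatorial verification of the equal-Levi pairings -- transparent in the $SL_n$ case via explicit coordinates on $\mathcal{Q}$, but more delicate for a general almost simple $G$ -- is where the bulk of the technical work lies. It is to be carried out using the explicit description of $L_\sigma$ and $U_\sigma$ in \prettyref{prop:stabilizer}, noting that the roots vanishing on a given vertex $x$ control both the Levi $L_x$ and the way adjacent simplices of the sector can be paired, so that the local combinatorics of walls in the fundamental sector drives the cancellations.
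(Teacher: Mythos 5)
Your setup coincides with the paper's: reduce to the simply-connected almost simple case via \prettyref{prop:reduction}, act on the affine building, restrict to Soul\'e's fundamental domain $\mathcal{Q}$ via \prettyref{thm:soule}, and replace $H_q(\Gamma_\sigma)$ by $H_q(L_\sigma)$ using \prettyref{thm:stabilizer}; the identification $\Gamma_\phi=G(k)$ and the reduction to exactness of the augmented row complexes are fine. The gap is precisely in the step you yourself flag as the main obstacle: you assert that each sector shell of $\mathcal{Q}$ admits a perfect matching into free-face pairs $(\sigma,\tau)$ with $L_\sigma=L_\tau$, but you neither construct such a matching nor explain why the equal-Levi condition can be arranged. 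It cannot be arranged naively: by \prettyref{prop:stabilizer}(ii)--(iii), $L_\sigma$ is the intersection of the Levis of the vertices of $\sigma$, and the Levi type jumps exactly along the walls of $\mathcal{Q}$ where some root satisfies $\alpha(x)=0$; a free-face pair straddling such a wall has $L_\sigma\subsetneq L_\tau$, so those contributions do not cancel. Any collapse must therefore be organized relative to the stratification of $\mathcal{Q}$ by Levi type, and that organization is the actual content of the proof, not a routine verification to be deferred.

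The paper handles this point by a different device: it uses Knudson's filtration of $\mathcal{Q}$ by the subcones $E_I^{(k)}$, on whose open strata the reductive part $L_\sigma$ is constant, observes that the coefficient system $\sigma\mapsto H_q(\Gamma_\sigma)$ is then locally constant in the sense of \cite[Proposition A.2.7]{knudson:2001:linear}, and deduces from the cone structure that $H_\bullet(\phi,\mathcal{H}_q)\rightarrow H_\bullet(\mathcal{Q},\mathcal{H}_q)$ is an isomorphism, so that $E^2_{p,q}$ vanishes for $p>0$ and equals $H_q(G(k),\mathbb{Z})$ for $p=0$. That citation is what replaces your unproven pairing argument. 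If you wish to keep a Morse-theoretic collapse, you would need to build the acyclic matching stratum by stratum, collapsing each $E_I^{(k)}\setminus\bigcup_{J\subset I}E_J^{(k-1)}$ within itself and then assembling the strata; as written, your argument restates what needs to be proved rather than proving it.
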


\begin{proof}
By \prettyref{prop:reduction}, we can assume that $G$ is
simply-connected absolutely almost simple over $k$. Then we can use
the action of the group $G(k[t])$ on the building associated to
$G(k(t))$. By \prettyref{thm:soule}, the subcomplex $\mathcal{Q}$ is a
fundamental domain for this action.
There is an associated spectral sequence
\begin{displaymath}
E^1_{p,q}=\bigoplus_{\dim\sigma=p,\sigma\in\mathcal{Q}}
H_q(\Gamma_\sigma,\mathbb{Z})\Rightarrow 
H_{p+q}(G(k[t]),\mathbb{Z}).
\end{displaymath}
In the above,
$\Gamma_\sigma$ is the stabilizer of the simplex $\sigma$.  
This is the spectral sequence computing the $G(k[t])$-equivariant
homology of the building, 
cf. \cite[p. 162]{knudson:2001:linear}. 

From \prettyref{thm:stabilizer}, we know the homology of the
stabilizers,  in particular, that it only depends on the reductive
part. In the notation of \cite{knudson:1997:polynomial}, there is a
filtration of the fundamental domain $\mathcal{Q}$ via subsets
$E_I^{(k)}$ for any $k$-element subset $I$ of roots of $G$. 
These subsets are simplicial subcones of $\mathcal{Q}$ which consist
of all simplices of $\mathcal{Q}$ such that the constant part of the
stabilizer is the standard parabolic subgroup of $G$
determined by the subset $I$. 
This yields a filtration 
\begin{displaymath}
\mathcal{Q}^{(k)}=\bigcup_IE^{(k)}_I. 
\end{displaymath}
Now for any two simplices $\sigma$, $\tau$ in 
\begin{displaymath}
E_I^{(k)}\setminus\bigcup_{J\subset I} E_J^{(k-1)},
\end{displaymath}
the stabilizers $\Gamma_\sigma$ and $\Gamma_\tau$ have the same
reductive part $L_\sigma=L_\tau$, and therefore they also have the
same homology, by \prettyref{thm:stabilizer}.  The coefficient system
$\sigma\mapsto H_q(\Gamma_\sigma)$ is then locally constant in the
sense of \cite[Proposition A.2.7]{knudson:2001:linear}, and we obtain
an isomorphism 
\begin{displaymath}
H_\bullet(\phi,\mathcal{H}_q)\rightarrow
H_\bullet(\mathcal{Q},\mathcal{H}_q). 
\end{displaymath}
This shows that the argument in the proof of \cite[Theorem 
3.4]{knudson:1997:polynomial} does not depend on $SL_n$. Therefore,
\cite[Proposition A.2.7]{knudson:2001:linear} implies that the
$E_2$-term of the above spectral sequence looks as follows: 
\begin{displaymath}
E^2_{p,q}=\left\{\begin{array}{cc}
H_q(G(k),\mathbb{Z}) & p = 0\\
0 & p>0
\end{array}
\right.
\end{displaymath}
The spectral sequence degenerates and the result is proved.
\end{proof}

\begin{remark}
\prettyref{thm:stabilizer} has
been used in \cite{rk2} to establish homotopy invariance for the
homology of Steinberg groups of rank two groups. 
Apart from this, it seems that the
  added generality of rings with many units in
  \prettyref{thm:stabilizer} can not be widely
  applied. Generalizations of \prettyref{thm:knudson} beyond the case
  $k[t]$ seem to be generally wrong. The failure of homotopy
  invariance for $H_1$ of $SL_2$ follows directly from
  \cite{krstic:mccool}. The failure of homotopy invariance for $H_2$
  of rank two groups has been established in \cite{rk2}. In these
  cases, one sees that $\mathcal{Q}$ fails quite badly to be a
  fundamental domain for the action of $G(R[t])$ if $R$ is not a field
  -- in case $SL_2$, the subcomplex $SL_2(R[t])\cdot\mathcal{Q}$ is
  not connected and in case $SL_3$, the subcomplex
  $SL_3(R[t])\cdot\mathcal{Q}$ is not simply-connected. 
\end{remark}

Concerning the subcomplex $G(R[t])\cdot\mathcal{Q}$ for $R$ an
integral domain, we have the following: 

\begin{proposition}
\begin{enumerate}[(i)]
\item The complex $E(R[t])\cdot\mathcal{Q}$ is connected. 
\item The complex
  $G(R[t])\cdot\mathcal{Q}$ is connected if $G(R[t])=E(R[t])\cdot
  G(R)$. This in particular holds for isotropic reductive groups $G$
  of rank $\geq 2$ and $R$ essentially smooth over a field. 
\item The complex $E(R[t])\cdot\mathcal{Q}$ is simply-connected if
  $K_2^G(R[t])\cong K_2^G(R)$, in particular for $G=SL_n$, $n\geq 5$
  and $R=k[t_1,\dots,t_m]$.
\end{enumerate}
\end{proposition}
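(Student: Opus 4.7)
The plan is to handle the three parts in order of increasing difficulty, each exploiting Soulé's cone structure on $\mathcal{Q}$ and the generation of $E(R[t])$ by root subgroups.

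\textbf{Part (i).} The key observation is that $E(R[t])$ is generated by the union
\[
\{x_\alpha(f) : \alpha\in\Phi^+,\ f\in R[t]\}\cup E(R),
\]
since every negative root element $x_{-\alpha}(f)$ is a conjugate $w_\alpha x_\alpha(\pm f)w_\alpha^{-1}$ by the Steinberg lift $w_\alpha=x_\alpha(1)x_{-\alpha}(-1)x_\alpha(1)\in E(R)$ of the simple reflection. I will then invoke the standard principle that the orbit of a connected subcomplex by a group generated by elements each of whose translate meets the subcomplex is again connected (a one-line induction on word length: $h_1\cdots h_n F$ meets $h_1\cdots h_{n-1}F$ in the translate of $F\cap h_nF$). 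For positive-root generators $x_\alpha(f)$, \prettyref{prop:stabilizer}(iii) yields $x_\alpha(f)\in U_x(k[t])$ for every vertex $x\in\mathcal{Q}$ with $\alpha(x)\geq\deg f$, and such vertices exist in abundance since $\mathcal{Q}$ is an unbounded simplicial cone along the dominant Weyl chamber; hence $\mathcal{Q}\cap x_\alpha(f)\mathcal{Q}\ni x$ is nonempty. For generators coming from $E(R)$, note that constants have $\omega_\infty$-valuation zero so $R\subseteq\mathcal{O}$ and hence $E(R)\subseteq G(\mathcal{O})$ fixes the apex $\phi\in\mathcal{Q}$; in particular $\mathcal{Q}\cap e\mathcal{Q}\ni\phi$. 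The principle gives (i).

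\textbf{Part (ii).} Since $G(R)\subseteq G(\mathcal{O})$ fixes $\phi$, the complex $G(R)\cdot\mathcal{Q}=\bigcup_{g\in G(R)}g\mathcal{Q}$ is a union of connected simplicial cones all sharing the vertex $\phi$, hence is itself connected. Given the hypothesis $G(R[t])=E(R[t])\cdot G(R)$, we get $G(R[t])\cdot\mathcal{Q}=E(R[t])\cdot\bigl(G(R)\cdot\mathcal{Q}\bigr)$ and the connectedness principle from (i)---applied now with $G(R)\cdot\mathcal{Q}$ in place of $\mathcal{Q}$---yields connectedness of the orbit. The hypothesis $G(R[t])=E(R[t])\cdot G(R)$ for $R$ essentially smooth over a field is a classical $K_1$-surjectivity: for $G=SL_n$ with $n\geq 3$ it is due to Suslin, and for simply-connected Chevalley groups of rank at least two in general to Stavrova, Petrov and Abe.

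\textbf{Part (iii).} For simple-connectedness I would run a van Kampen / complex-of-groups argument on the cover of $Y:=E(R[t])\cdot\mathcal{Q}$ by the contractible translates $\{g\mathcal{Q}:g\in E(R[t])\}$. This expresses $\pi_1(Y)$ by generators indexed by pairs of translates sharing a codimension-one face, and relations indexed by triple intersections. After rewriting in group-theoretic language via the stabilizer description \prettyref{prop:stabilizer}(iii), this presents $\pi_1(Y)$ as $E(R[t])$ modulo the normal subgroup generated by the simplex stabilizers $\Gamma_\sigma\cap E(R[t])$ for $\sigma\in\mathcal{Q}$. Matching these stabilizer subgroups against the Steinberg presentation of $\St_G(R[t])$ then identifies the resulting quotient with the cokernel of $K_2^G(R)\to K_2^G(R[t])$; under the stated hypothesis this cokernel vanishes, so $Y$ is simply-connected. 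The specific case $G=SL_n$, $n\geq 5$, $R=k[t_1,\dots,t_n]$ is the Tulenbaev--Suslin $K_2$-excision for polynomial rings.

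\textbf{Main obstacle.} The delicate step is the identification in (iii) of $\pi_1(Y)$ with the cokernel of $K_2^G(R)\to K_2^G(R[t])$. One has to check that the 2-dimensional relations coming from triple intersections of translates of $\mathcal{Q}$ correspond precisely to the defining Steinberg relations of $\St_G(R[t])$ modulo those already present in $\St_G(R)$, and that no further relations arise from higher intersections. Knudson's treatment of the $SL_n$ case in \cite{knudson:1997:polynomial} provides the template, but extending it to arbitrary simply-connected Chevalley groups requires adapting the Matsumoto--Steinberg root subgroup presentation to the geometry of the affine building.
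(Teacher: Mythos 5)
Your parts (i) and (ii) are correct and take essentially the paper's route: positive-root elements $x_\alpha(f)$ lie in stabilizers of vertices far enough out in the cone $\mathcal{Q}$, negative-root elements are obtained by conjugating with Weyl representatives lying in the stabilizer of $\phi$, and connectivity of the orbit follows from generation by stabilizers. The only difference is that you prove the connectivity principle by hand (the word-length induction) where the paper simply invokes \cite[Theorem 2]{soule:1979:polynomial}, which packages exactly this equivalence between connectivity of $\Gamma\cdot\mathcal{Q}$ and generation of $\Gamma$ by the simplex stabilizers of the fundamental domain.

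Part (iii), however, contains a genuine error in the group-theoretic translation. You assert that the van Kampen computation ``presents $\pi_1(Y)$ as $E(R[t])$ modulo the normal subgroup generated by the simplex stabilizers $\Gamma_\sigma\cap E(R[t])$''. By your own part (i) these stabilizers already generate $E(R[t])$, so that normal closure is all of $E(R[t])$ and your formula would give $\pi_1(Y)=1$ unconditionally --- making the $K_2$-hypothesis superfluous and contradicting the remark in the paper that $SL_3(R[t])\cdot\mathcal{Q}$ fails to be simply connected when $R$ is not a field. The correct statement (the second half of \cite[Theorem 2]{soule:1979:polynomial}, which is what the paper's sketch invokes) is that $Y$ is simply connected if and only if the canonical surjection from the amalgamated sum (colimit) $\varinjlim_{\sigma\in\mathcal{Q}}\Gamma_\sigma\rightarrow E(R[t])$ is also injective, i.e.\ $E(R[t])$ is the amalgam of the stabilizers along their intersections; $\pi_1(Y)$ is controlled by the \emph{kernel} of this map, not by a quotient of $E(R[t])$. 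Correspondingly, the hypothesis $K_2^G(R[t])\cong K_2^G(R)$ is used to force this injectivity via the Steinberg presentation, not to kill a cokernel of $K_2^G(R)\rightarrow K_2^G(R[t])$. Your instinct that the delicate point is matching the triple-intersection relations against the Steinberg relations is sound, but the object being computed must be set up as the kernel of the amalgam map before that matching can even be attempted.
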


\begin{proof}
Every elementary matrix for a positive root is contained in some
stabilizer, and the stabilizer of $\phi$ contains the Weyl group. By
\cite[Theorem 2]{soule:1979:polynomial} the complex
$E(\Phi,R[t])\cdot\mathcal{Q}$ is connected, hence (i). The
same argument shows (ii). The additional consequence in (ii) is the
work of Suslin \cite{suslin:1977:serre}, Abe \cite{abe:1983:whitehead}
and in the non-split case Stavrova \cite{stavrova}. 
We only sketch  (iii): it follows from the assumption on homotopy
invariance of $K_2$ that $E(R[t])$ is an amalgam of the stabilizers
along their intersections. Again \cite[Theorem
2]{soule:1979:polynomial} shows the claim. The additional assertion
for $SL_n$ is a consequence of \cite{tulenbaev}.
\end{proof}

\end{document}